\documentclass{amsart}

\usepackage{color,graphicx,amssymb,latexsym,amsfonts,txfonts,amsmath,amsthm}
\usepackage{pdfsync}
\usepackage{amsmath,amscd}
\usepackage[all,cmtip]{xy}

\usepackage{hyperref}
\hypersetup{
    colorlinks=true,       
    linkcolor=blue,          
    citecolor=blue,        
    filecolor=blue,      
    urlcolor=blue           
}

\input epsf
\newcommand{\s}{\vspace{0.3cm}}

\newtheorem{theo}{Theorem} 

\newtheorem{coro}[theo]{Corollary}
\newtheorem{lemm}[theo]{Lemma}

\theoremstyle{remark}
\newtheorem{rema}[theo]{\bf Remark}

\begin{document}
\title{The structure of extended function groups}
\author{Rub\'en A. Hidalgo}
\address{Departamento de Matem\'atica y Estad\'{\i}stica, Universidad de La Frontera, Temuco, Chile}
\email{ruben.hidalgo@ufrontera.cl}

 \thanks{ORCID: https://orcid.org/0000-0003-4070-2819}
\thanks{Partially supported by project Fondecyt 1190001}
\subjclass[2010]{ 30F10, 30F40}
\keywords{Kleinian groups, equivariant loop theorem}

\begin{abstract}
A function group is a finitely generated Kleinian group with an invariant connected component of its region of discontinuity.
An extended function group is a finitely generated extended Kleinian group that contains orientation reversing elements and keep invariant a connected components of its region of discontinuity.

An structural decomposition of function groups, in terms of the Klein-Maskit combination theorems, was provided by Maskit in the middle of the 70's. 
One should expect a similar decomposition structure for extended function groups, but it seems not to be stated in the existing literature.
The aim of this paper is to state and procvide a proof of such a decomposition structural picture. 

\end{abstract}

\maketitle

\section{Introduction}
The conformal (respectively, anticonformal) automorphisms of the Riemann sphere $\widehat{\mathbb C}={\mathbb C} \cup \{\infty\}$ are provided by the M\"obius  (respectively, extended M\"obius) transformations, that is, transformations of the form $T(z)=(az+b)/(cz+d)$ (respectively, $L(z)=(a\overline{z}+b)/(c\overline{z}+d)$) where $a,b,c,d \in {\mathbb C}$ are such that $ad-bc=1$. The group of M\"obius transformations ${\mathbb M}$ is isomorphic to the special projective linear group ${\rm PSL}_{2}({\mathbb C})$ and the group of M\"obius and extended M\"obius transformations is $\widehat{\mathbb M}=\langle {\mathbb M},J(z)=\overline{z}\rangle$. 

A {\it Kleinian group} (respectively, an {\it extended Kleinian group}) is a discrete subgroup of ${\mathbb M}$ (respectively, a discrete subgroup of $\widehat{\mathbb M}$ and containing extended M\"obius transformations). The {\it region of discontinuity} of a (extended) Kleinian group $K$ is the locus of points $p \in \widehat{\mathbb C}$ admitting an open neighborhood $p \in U \subset \widehat{\mathbb C}$ such that $k(U) \cap U \neq \emptyset$ only for finitely many elements $k \in K$.
By the definition, the region of discontinuity is an open set (it might be empty). The complement of the region of discontinuity is called the {\it limit set} and it is the place where the dynamics of the group action is chaotic. The history of Kleinian groups can be traced back to Poincar\'e \cite{Poincare0}.

A {\it function group} is a finitely generated Kleinian group (with a non-empty region of discontinuity) admitting an invariant connected component of its region of discontinuity.
Basic examples of function groups are provided by {\it elementary  groups} (Kleinian groups with finite limit set), 
{\it quasifuchsian groups} (function groups whose limit set is a Jordan curve) and {\it totally degenerate groups} (non-elementary finitely generated Kleinian  groups whose region of discontinuity is both connected and simply-connected). In a series of papers, 
Maskit provided the following decomposition structure of function groups, in terms of the Klein-Maskit combination theorems \cite{Maskit:Comb, Maskit:Comb4, Maskit:book}. 

\begin{theo}[Maskit's decomposition of function groups \cite{Maskit:construction, Maskit:function2, Maskit:function3, Maskit:function4}]
Every function group is constructed from elementary groups, quasifuchsian groups and totally degenerate groups by a finite number of applications of the Klein-Maskit combination theorems. Moreover, in the construction, the
amalgamated free products and the HNN-extensions are realized along either (i) a finite cyclic group (including the trivial group) or (ii) a cyclic group generated by an accidental parabolic element.
\end{theo}

An {\it extended function group} is a finitely generated extended Kleinian group with an invariant connected component of its region of discontinuity. Basic examples of extended function groups are the {\it extended elementary groups} (extended Kleinian groups with finite limit set), {\it extended quasifuchsian groups} (finitely generated extended function groups whose limit set is a Jordan curve) and {\it extended totally degenerate groups} (non-elementary extended finitely generated Kleinian groups with connected and simply-connected region of discontinuity).  

We should note that the term ``extended quasifuchsian group" used in this paper is different from the given by other authors in the sense that they refer it to Kleinian groups whose limit set is a Jordan curve and contains elements permuting the two discs bounded by it.

As it is for the case of function groups, one should expect a similar decomposition result for the extended function groups (see Theorem \ref{corolario3}). It seems that such a result is missing in the literature and the purpose of this note is to provide a simple proof, following Maskit's arguments for the function group case.  

\begin{theo}[Decomposition of extended function groups]\label{corolario3}
Every extended function group is constructed from (extended) elementary groups, (extended) quasifuchsian groups and (extended) totally degenerate groups by a finite number of applications of the Klein-Maskit combination theorems. Moreover, in the construction, the
amalgamated free products and the HNN-extensions are realized along either (i) a finite cyclic group (including the trivial group) or (ii) an infinite dihedral group generated by two reflections or (iii) a cyclic group generated by either an accidental parabolic element or by a pseudo-parabolic transformation whose square is accidental parabolic.
\end{theo}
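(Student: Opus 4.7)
I would split the argument via the orientation-preserving subgroup. Let $G$ be an extended function group with invariant component $\Delta$, and let $G^{+}\leq G$ be its index at most two orientation-preserving subgroup. Then $G^{+}$ is itself a finitely generated function group with $\Delta$ as an invariant component, so Maskit's classical decomposition applies: $G^{+}$ is built from orientation-preserving elementary, quasifuchsian and totally degenerate groups via the Klein-Maskit combinations, where the amalgamating subgroups are trivial, finite cyclic, or cyclic generated by an accidental parabolic. The task is to promote this decomposition to one for $G$ itself.

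\textbf{Equivariant structure loops.} The plan is to use the equivariant loop theorem to obtain, inside $\Delta$, a system $\mathcal{L}$ of pairwise disjoint Jordan curves which (i) projects to a Maskit structure system of loops in the orbifold $\Delta/G^{+}$, and (ii) is invariant under the full group $G$, not merely under $G^{+}$. Any $G^{+}$-invariant structure system is permuted by $G$; one may then choose $\mathcal{L}$ (after an isotopy if needed) so that each element of $G$ either preserves each curve of $\mathcal{L}$ setwise or sends it to another curve of $\mathcal{L}$.

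\textbf{Analysis of anticonformal stabilizers.} For each loop $\ell\in\mathcal{L}$ I would compare the stabilizers $\mathrm{Stab}_{G^{+}}(\ell)\leq \mathrm{Stab}_{G}(\ell)$. If they agree, the corresponding combination from the decomposition of $G^{+}$ is used verbatim. If an anticonformal element $\tau$ preserves $\ell$ setwise, the stabilizer acquires an order-two extension, and a case check shows that the possible enlargements are exactly those listed in the statement: a trivial stabilizer becomes $\mathbb Z/2$ generated by a reflection, a cyclic elliptic stabilizer becomes a (finite) dihedral group that one should refine away by further $G$-equivariant cuts into finite cyclic pieces, and a cyclic group generated by an accidental parabolic becomes either an infinite dihedral group generated by two reflections or the cyclic group generated by a pseudo-parabolic whose square is the original accidental parabolic. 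Finally, if $\tau$ exchanges $\ell$ with a distinct loop $\ell'\in\mathcal{L}$, the two associated $G^{+}$-combinations fuse into a single $G$-combination with the same amalgamating type.

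\textbf{Assembly and main obstacle.} With this dictionary in hand, I would reassemble $G$ by performing the Klein-Maskit combinations along the loops of $\mathcal{L}/G$ in the same order used by Maskit for $G^{+}$. Each resulting basic piece is an order-two extension of a basic piece for $G^{+}$; one checks directly that adjoining an anticonformal element to an elementary, quasifuchsian or totally degenerate group preserves the defining property (finiteness of the limit set, Jordan curve limit set, connected and simply connected region of discontinuity), giving their extended analogues. The principal obstacle I expect is twofold: ensuring that the $G$-equivariant system $\mathcal{L}$ can be refined so that no finite dihedral amalgamation survives (only finite cyclic ones remain), and verifying the precise invariance hypotheses of the Klein-Maskit combination theorems after each stabilizer has been enlarged by an orientation-reversing involution, with particular care needed in the pseudo-parabolic case, where the precisely invariant horoball from Maskit's decomposition of $G^{+}$ must be shrunk in order to remain precisely invariant under the larger stabilizer in $G$.
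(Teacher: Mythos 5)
Your high-level strategy --- decompose the orientation-preserving half via Maskit's theorem, make the cutting system invariant under the full group by means of the equivariant loop theorem, and then track the order-two enlargements of the loop stabilizers --- is the strategy of the paper. (The paper first passes, via Selberg's lemma, to a torsion-free finite-index normal subgroup $\Gamma\triangleleft K$ and applies the equivariant loop theorem to the finite group $K/\Gamma$ acting on the surface $\Delta/\Gamma$, which also treats the non-extended case uniformly; your application to the pair $(K^{+},K/K^{+})$ on the orbifold $\Delta/K^{+}$ would serve the same purpose.)

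There is, however, a genuine gap. You treat the whole of Maskit's cutting system for $K^{+}$ as a single family $\mathcal{L}$ of Jordan curves to be made $K$-equivariant by one application of the equivariant loop theorem. But that theorem (Theorem \ref{lifting2}) only produces an equivariant family among loops of $\Delta/K^{+}$ that \emph{lift to loops} in $\Delta$, i.e.\ whose lifts are simple closed curves contained in the region of discontinuity. The amalgamations of Maskit's decomposition that occur along accidental parabolics are of a different nature: the corresponding cutting curves are the true axes of the accidental parabolics, which meet the limit set at the parabolic fixed point and lift to arcs rather than loops, so they lie outside the scope of the equivariant loop theorem; moreover ``an isotopy if needed'' does not yield equivariance in either stage, since equivariance is precisely the hard content of the Hidalgo--Maskit theorem, which constructs the system from scratch rather than adjusting a given one. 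The paper therefore proceeds in two separate stages: it first cuts $K$ along a $K$-invariant system of loops in $\Delta$ into extended B-groups (Theorem \ref{corolario2}, where the equivariant loop theorem is the engine, and where the amalgamating subgroups are trivial, finite cyclic, or dihedral generated by two reflections), and only afterwards cuts each extended B-group along the axes of its accidental parabolics (Theorem \ref{propo}); in this second stage equivariance is automatic because the axis of an accidental parabolic is canonically determined by the hyperbolic geometry of $\Delta$ (Lemma \ref{lema2}). Your argument needs this separation to go through. Two smaller points: your suggestion that finite dihedral stabilizers generated by two reflections can be ``refined away'' into cyclic ones is unjustified and unnecessary --- the paper simply admits them as amalgamating subgroups; and the precise-invariance verifications you rightly flag as delicate are exactly what the case analysis of the $K$-stabilizer of a structure loop (Lemma \ref{lemita2} and the remark following it) is there to supply.
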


The idea of the proof is the following. Let $K$ be an extended function group, with invariant connected component $\Delta$. Its index two orientation preserving half  $K^{+}=K \cap {\mathbb M}$  is a function group with the same invariant component.  As $K^{+}$ is finitely generated, Selberg's lemma \cite{Selberg} asserts the existence of a torsion free finite index normal subgroup $\Gamma_{1}$ of $K^{+}$ (which is again a function group). Since $K=\langle K^{+},\tau\rangle$, where $\tau^{2} \in K^{+}$, then $\Gamma=\Gamma_{1} \cap \tau \Gamma_{1} \tau^{-1}$ is a finite index torsion free normal subgroup of $K$.
The Ahlfors finiteness theorem \cite{Ahlfors} asserts that $S=\Delta/\Gamma$ is an analytically finite Riemann surface, that is, $S=\widehat{S}-C$, where $\widehat{S}$ is some closed Riemann surface and $C \subset \widehat{S}$ is a finite set of points (it might be empty). 
The finite group $G=K/\Gamma$ is a group of conformal and anticonformal automorphisms of $S$.
Maskit's decomposition of function groups may be applied to $\Gamma$. There are many possible decompositions, but in order to get one which can be used to obtain a decomposition of $K$, we must find one which is in some sense equivariant with respect to $G$. This is solved by Theorem \ref{lifting2} (equivariant theorem for function groups) obtained by Maskit and the author in \cite{H-M:lifting}. This permits us to obtain a first decomposition structural picture (see Theorem \ref{corolario2}). In such a picture, there may appear (extended) B-groups as factors. A {\it B-group} (respectively, an {\it extended B-group}) is a function group (respectively, an extended function group) with a simply-connected invariant component in its region of discontinuity. 
A subtle modification to Maskit's arguments, for the case of B-groups, to deal with these extended B-groups is provided (see Theorem \ref{propo}).

\s

 A. Haas's thesis \cite{Haas} concerns with  uniformizing  groups of conformal and anticonformal automorphisms acting on plane domains. It leads naturally to extended function groups, but it seems that the above decomposition does not follows immediately from it.

\section{Preliminaries}\label{Sec:prelim}

\subsection{Riemann orbifolds}
A {\it Riemann orbifold} ${\mathcal O}$ consists of a (possible non-connected) Riemann surface $S$ (called the {\it underlying Riemann surface of the orbifold}), an isolated collection of points of $S$ (called the {\it cone points of the orbifold}) and associated to each cone point an integer at least $2$ (called the {\it cone order}).  A connected Riemann orbifold is {\it analytically finite} if its underlying (connected) Riemann surface is the complement of a finite number of points of a closed Riemann surface and the number of cone points is also finite. We may think of a Riemann surface as a Riemann orbifold without cone points.
A {\it conformal automorphism} (respectively, {\it anticonformal automorphism})  of the Riemann orbifold ${\mathcal O}$ is a conformal automorphism (respectively, anticonformal) of the underlying Riemann surface $S$ which preserves both its set of cone points together their cone orders (cone points can be permuted but preserving the cone order). We denote by ${\rm Aut}({\mathcal O})$ (respectively, ${\rm Aut}(S)$) the group of conformal/anticonformal automorphisms of ${\mathcal O}$ (respectively, $S$) and by ${\rm Aut}^{+}({\mathcal O})$ (respectively, ${\rm Aut}^{+}(S)$) its subgroup of conformal automorphisms.

\subsection{Kleinian and extended Kleinian groups}
In the following, we recall some facts on (extended) Kleinian groups. A good source on the topic are the calssical books \cite{Maskit:book,MT}. 
Let us start by observing that, if $K_{1}<K_{2}<\widehat{\mathbb M}$ and $K_{1}$ has finite index in $K_{2}$, then both are discrete if one of them is and, in the discreteness case, both have the same region of discontinuity.

Let $K<\widehat{\mathbb M}$ and set $K^{+}:=K \cap {\mathbb M}$. If $K\neq K^{+}$, then $K^{+}$ is called the {\it orientation-preserving half} of $K$ and, in this case, 
$K$ is an extended Kleinian group if and only if $K^{+}$ is a Kleinian group; in which case both have the same region of discontinuity. 
If moreover, $K$ is an extended Kleinian group and $K^{+}$ is a function group, then either: (i) $K$ is an extended function group or (ii) $K^{+}$ is a quasifuchsian group and there is an element of $K-K^{+}$ permuting both discs bounded by the limits set Jordan curve (so $K$ is not an extended function group).

\subsection{Klein-Maskit's decomposition theorems}
Let $K$ be a Kleinian group with region of discontinuity $\Omega$ and let $H$ be a subgroup of $K$ with limit set $\Lambda(H)$. 
A set $X \subset \widehat{\mathbb C}$ is called {\it precisely invariant under $H$ in $K$} if $E(X)=X$, for every $E \in H$, and $T(X) \cap X = \emptyset$, for every $T \in K\setminus H$. 

We will assume $H$ to be either (i) the trivial group, (ii) a finite cyclic group or (iii) an infinite cyclic group generated by a parabolic transformation.
If $H$ is a cyclic subgroup, a {\it precisely invariant disc} $B$ is the interior of a closed topological
disc $\overline{B}$, where $\overline{B}-\Lambda(H) \subset \Omega$ is precisely invariant under $H$ in $K$.

\begin{theo}[Klein-Maskit's combination theorems \cite{Maskit:Comb, Maskit:Comb4}]\label{KMC}
\mbox{}

\noindent
(1) (Amalgamated free products).
For $j=1,2$, let $K_{j}$ be a Kleinian group, let $H \leq K_{1} \cap K_{2}$ be a cyclic subgroup (either trivial, finite or generated by a parabolic transformation), $H \neq K_{j}$, and 
let $B_{j}$ be a precisely invariant disc under $H$ in $K_{j}$. Assume that $B_{1}$ and $B_{2}$ have as a common boundary the simple loop $\Sigma$ and that $B_{1} \cap B_{2}=\emptyset$.
Then $K=\langle K_{1},K_{2}\rangle$  is a Kleinian group isomorphic to the free product of $K_{1}$ and $K_{2}$ amalgamated over $H$, that is, $K=K_{1} *_{H} K_{2}$, and every elliptic or parabolic element of $K$ is conjugated in $K$ to an element of either $K_{1}$ or $K_{2}$.
Moreover, if $K_{1}$ and $K_{2}$ are both geometrically finite, then $K$ is also geometrically finite.

\noindent
(2) (HNN extensions).
Let $K$ be a Kleinian group. For $j=1,2$, let $B_{j}$ be a precisely invariant disc under the cyclic subgroup $H_{j}$ (either trivial, finite or generated by a parabolic) in $K$, let $\Sigma_{j}$ be the boundary loop of $B_{j}$ and assume that $T(\overline{B}_{1}) \cap \overline{B}_{2} =\emptyset$, for every $T \in K$. Let $A$ a loxodromic transformation such that $A(\Sigma_{1})=\Sigma_{2}$, $A(B_{1}) \cap B_{2}=\emptyset$, and $A^{-1} H_{2} A=H_{1}$. Then $K_{A}=\langle K, A\rangle$ is a Kleinian group, isomorphic to the HNN-extension $K*_{\langle A \rangle}$  (that is, every relation in $K_{A}$ is consequence of the realtions in $K$ and the relations $A^{-1}H_{2}A=H_{1}$). If each $H_{j}$, for $j=1,2$, is its own normalization in $K$, then every elliptic or parabolic element of $K_{A}$ is conjugated to some element of $K$. Moreover, if $K$ is geometrically finite, then $K_{A}$ is also geometrically finite.

\end{theo}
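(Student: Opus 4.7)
The function group case is Maskit's original result from \cite{Maskit:function2, Maskit:function3, Maskit:function4}, so my plan concentrates on the extended case and reduces it to Maskit's theorem via a torsion-free normal subgroup and an equivariant descent, following the roadmap laid out in the introduction.

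First, I would set up the finite cover. Let $K$ be an extended function group with invariant component $\Delta$ and $K^{+}$ its orientation-preserving half. Selberg's lemma applied to the finitely generated Kleinian group $K^{+}$ produces a torsion-free finite-index normal subgroup; intersecting its finitely many $K$-conjugates, I may assume this subgroup $\Gamma$ is normal of finite index in $K$ itself. Since $[K:\Gamma]<\infty$, the three groups $\Gamma$, $K^{+}$, $K$ share their region of discontinuity, $\Delta$ is an invariant component of $\Gamma$, and $S=\Delta/\Gamma$ is an analytically finite Riemann surface on which the finite group $G=K/\Gamma$ acts by conformal and anticonformal automorphisms.

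Next I would apply Maskit's decomposition to $\Gamma$ alone, getting a finite iteration of Klein-Maskit combinations along cyclic groups. A generic choice of this decomposition is not respected by $G$; to remedy this, I would invoke the equivariant loop theorem for function groups (Theorem \ref{lifting2}, from \cite{H-M:lifting}), which guarantees that the system of precisely invariant loops realizing the combinations can be chosen $G$-invariantly in $\Delta$. The same loop system then simultaneously yields a Klein-Maskit decomposition of $K$ whose factors are the $K$-stabilizers of the pieces, i.e.\ finite extensions of the corresponding $\Gamma$-factors by subgroups of $G$. This produces the preliminary structural statement recorded as Theorem \ref{corolario2}.

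The main obstacle will be that some of the resulting $K$-factors may be extended B-groups which are neither extended quasifuchsian nor extended totally degenerate, and these must be decomposed further to finish the theorem. To handle them I would prove Theorem \ref{propo}, adapting Maskit's internal analysis of B-groups to the anticonformal setting. Inside the simply-connected invariant component one uses the equivariant analog of Maskit's structural loops around accidental parabolics and records how the anticonformal elements of $K$ act on them: when an anticonformal involution fixes such a loop pointwise one obtains an amalgamation along an infinite dihedral group generated by two reflections, and when an anticonformal element swaps the two sides of an accidental parabolic axis one obtains an amalgamation along the cyclic group generated by a pseudo-parabolic transformation whose square is that accidental parabolic. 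Together with the trivial, finite cyclic, and accidental-parabolic cyclic amalgamations inherited from Maskit, these exhaust the list in the statement, and termination follows from the analytic finiteness of $S$.
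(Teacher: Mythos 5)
Your proposal does not address the statement at hand. The statement is Theorem \ref{KMC}, the Klein--Maskit combination theorems themselves: given two Kleinian groups $K_{1},K_{2}$ sharing a cyclic subgroup $H$ with precisely invariant discs $B_{1},B_{2}$ meeting along a common boundary loop $\Sigma$, one must show that $\langle K_{1},K_{2}\rangle$ is discrete, is isomorphic to $K_{1}*_{H}K_{2}$, that elliptic and parabolic elements are conjugate into a factor, and that geometric finiteness is preserved; similarly for the HNN case. A proof of this requires a normal-form (ping-pong) argument showing that a reduced word alternating between $K_{1}\setminus H$ and $K_{2}\setminus H$ moves points of $B_{1}\cup B_{2}$ in a controlled way and hence cannot be the identity, an analysis of the limit set of the combined group, and a fundamental-domain argument for geometric finiteness. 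None of this appears in your text. What you have written instead is an outline of the proof of Theorem \ref{corolario3}, the paper's main decomposition theorem for extended function groups, which \emph{uses} Theorem \ref{KMC} as a black box; you have effectively assumed the statement you were asked to prove.

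For what it is worth, the paper itself does not prove Theorem \ref{KMC} either: it is imported verbatim from Maskit's work \cite{Maskit:Comb, Maskit:Comb4} (see also \cite{Maskit:book}) and is used as an external tool throughout Section \ref{Sec:Klein-Maskit-Comb}. So the correct response to this particular statement is either to cite those sources or to reproduce Maskit's normal-form argument; your equivariant-descent outline, while broadly consistent with the paper's strategy for Theorem \ref{corolario3}, is not a proof of the combination theorems.
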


\subsection{An equivariant loop theorem for function groups}\label{Sec:equivariant}
Let $K$ be a function group and $\Delta$ be a $K$-invariant connected component of its region of discontinuity. By the Alhfor's finiteness theorem \cite{Ahlfors,Ahlfors2},
the quotient ${\mathcal O}=\Delta/K$ turns out to be an analytically finite Riemann orbifold. Let ${\mathcal B} \subset {\mathcal O}$ be the (finite) collection of the cone points and let ${\mathcal G} \subset {\mathcal O}-{\mathcal B}$ be the collection of loops which lift to loops under the natural regular holomorphic covering $\pi:\Delta^{0} \to {\mathcal O}-{\mathcal B}$, where $\Delta^{0}$ is the open dense subset of $\Delta$ consisting of those points with trivial $K$-stabilizer.
In \cite{Maskit:planarcover}, Maskit proved the existence of a finite subcollection ${\mathcal F} \subset {\mathcal G}$ of pairwise disjoint loops inside ${\mathcal O}-{\mathcal B}$, each one being a finite power of a simple loop, such that the cover $\pi$ is determined as a highest regular planar cover for which the loops in ${\mathcal F}$  lift to loops (such a collection of loops is not unique). The collection ${\mathcal F}$ is called a {\it fundamental system of loops} of the above regular planar covering. Assume that there is a finite group $H<{\rm Aut}({\mathcal O})$ whose elements lifts to automorphisms of $\Delta$ under $\pi$. Then, in 
\cite{H-M:lifting}, Maskit and the author proved that there is a fundamental system of loops ${\mathcal F}$ being equivariant under $H$.

\begin{theo}[Equivariant loop theorem for function groups \cite{H-M:lifting}]  \label{lifting2}
Let $K$ be a function group, with invariant connected component $\Delta$ in its region of discontinuity,  ${\mathcal O}=\Delta/K$ (which is  an analytically finite Riemann orbifold) and let ${\mathcal B}$ the finite set of cone points of ${\mathcal O}$. Let $\pi:\Delta \to {\mathcal O}$ be the natural regular branched regular covering induced by $K$. Let ${\mathcal G}$ be the collection of loops in ${\mathcal O}-{\mathcal B}$ which lift to loops in $\Delta$ under $\pi$. If $H<Aut({\mathcal O})$ lifts to a group of automorphisms of $\Delta$, then there is a finite sub-collection ${\mathcal F} \subset {\mathcal G}$ such that:
\begin{enumerate}
\item ${\mathcal F}$ consists of pairwise disjoint powers of simple loops;
\item ${\mathcal F}$ is $H$-invariant; and
\item every loop in ${\mathcal G}$ is homotopic to the product of finite powers of a finite loops in ${\mathcal F}$.
\end{enumerate}
The collection ${\mathcal F}$ is called a fundamental set of loops for the pair $(K,H)$.
\end{theo}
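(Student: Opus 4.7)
\emph{Proof sketch.}
The strategy is to reduce the equivariant statement to Maskit's non-equivariant planar covering theorem, applied to the Kleinian group enlarged by a lift of $H$.

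First, the hypothesis on $H$ provides a finite extension $\tilde{H} < {\rm Aut}(\Delta)$ that normalizes $K$ and projects onto $H$. Set $K^{\sharp} := \langle K, \tilde{H}\rangle$; this is a discrete subgroup of $\widehat{\mathbb M}$ with $\Delta$ as an invariant component, and the orbifold quotient $\Delta/K^{\sharp}$ is naturally identified with ${\mathcal O}/H$. It is again analytically finite, with cone locus ${\mathcal B}^{\sharp}$ consisting of $p({\mathcal B})$ together with the branch locus of the quotient $p:{\mathcal O}\to{\mathcal O}/H$. Applying Maskit's planar covering theorem \cite{Maskit:planarcover} to $(K^{\sharp})^{+}$ (which still has $\Delta$ as an invariant component with quotient an analytically finite orbifold) extracts a fundamental system ${\mathcal F}^{\sharp}$ of pairwise disjoint loops in ${\mathcal O}/H - {\mathcal B}^{\sharp}$, each a finite power of a simple loop.

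Then set ${\mathcal F} := p^{-1}({\mathcal F}^{\sharp}) \subset {\mathcal O} - {\mathcal B}$. Since $p$ is an unbranched cover on the complement of ${\mathcal B}^{\sharp}$, ${\mathcal F}$ is a pairwise disjoint union of finite powers of simple loops, and it is $H$-invariant by construction, giving conditions (1) and (2). Moreover, each loop in ${\mathcal F}$ lifts to a loop in $\Delta^{0}$ under $\pi$: its $p$-image lies in ${\mathcal F}^{\sharp}$ and so lifts to a loop under $\pi^{\sharp}=p\circ\pi$, and projecting that lift from $\Delta^{0}$ to ${\mathcal O}$ recovers the given component of ${\mathcal F}$.

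The main obstacle is condition (3). The subgroup $N=\pi_{1}(\Delta^{0})$ sits simultaneously as the kernel of $\pi_{1}({\mathcal O}-{\mathcal B})\twoheadrightarrow K$ and of $\pi_{1}({\mathcal O}/H-{\mathcal B}^{\sharp})\twoheadrightarrow K^{\sharp}$. Maskit's defining property of ${\mathcal F}^{\sharp}$ is that $N$ is the normal closure in $\pi_{1}({\mathcal O}/H-{\mathcal B}^{\sharp})$ of the homotopy classes of ${\mathcal F}^{\sharp}$; a Reidemeister--Schreier rewriting across the finite cover $p$ then identifies $N$ with the normal closure of the homotopy classes of ${\mathcal F}$ inside the index-$|H|$ subgroup $\pi_{1}({\mathcal O}-{\mathcal B})$, which is exactly condition (3). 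This descent --- transferring Maskit's non-equivariant generation statement from the quotient orbifold ${\mathcal O}/H$ down to the intermediate orbifold ${\mathcal O}$ --- is the technical crux of the argument.
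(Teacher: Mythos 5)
First, note that the paper does not actually prove Theorem \ref{lifting2}: it is imported verbatim from \cite{H-M:lifting}, so there is no internal proof to compare against. Judged on its own terms, your reduction works, and is essentially the standard argument, in the case where $H\leq {\rm Aut}^{+}({\mathcal O})$: forming $K^{\sharp}=\langle K,\tilde H\rangle$, applying Maskit's planarity theorem to $\Delta\to\Delta/K^{\sharp}={\mathcal O}/H$, pulling back, and using the fact that if $N$ is the normal closure of $X\subset N$ in $G$ and $N\leq M\trianglelefteq G$, then $N$ is the normal closure in $M$ of the conjugates $\{txt^{-1}\}$ over a transversal $t$ of $M$ (these being exactly the components of $p^{-1}$ of the loops, raised to the appropriate powers). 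One point you gloss over: the loops you produce live in ${\mathcal O}-p^{-1}({\mathcal B}^{\sharp})$, which is strictly smaller than ${\mathcal O}-{\mathcal B}$ whenever $H$ has fixed points that are not cone points of ${\mathcal O}$, so the normal-closure identity you obtain is in $\pi_{1}({\mathcal O}-p^{-1}({\mathcal B}^{\sharp}))$ and must be pushed forward under the (surjective) puncture-filling map to $\pi_{1}({\mathcal O}-{\mathcal B})$. This is routine but should be said.

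The genuine gap is the orientation-reversing case, which is precisely the case the paper needs: in Section \ref{Sec:pruebacoro2} the theorem is applied with $H=K/G$ for $K$ an \emph{extended} function group, so $H$ contains anticonformal automorphisms. Then $\Delta/K^{\sharp}={\mathcal O}/H$ is a Klein orbifold, possibly non-orientable and with boundary arcs along the mirrors of the reflections in $H$; ${\mathcal B}^{\sharp}$ is not a finite set of cone points and Maskit's planar covering theorem does not apply to it. Your fallback of applying the theorem to $(K^{\sharp})^{+}$ does not rescue this: the quotient $\Delta/(K^{\sharp})^{+}$ is ${\mathcal O}/H^{+}$ (with $H^{+}=H\cap{\rm Aut}^{+}({\mathcal O})$), not ${\mathcal O}/H$, so the fundamental system ${\mathcal F}^{\sharp}$ you extract lives on ${\mathcal O}/H^{+}$ and its pullback to ${\mathcal O}$ is only $H^{+}$-invariant. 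Upgrading it to full $H$-invariance amounts to making a fundamental system on ${\mathcal O}/H^{+}$ invariant under the residual anticonformal involution of ${\mathcal O}/H^{+}$ --- which is an instance of the very equivariance statement being proved, not a consequence of the non-equivariant planarity theorem. So as written your argument establishes conditions (1) and (3) and only the $H^{+}$-part of condition (2); the anticonformal part of the equivariance, which is the actual content of \cite{H-M:lifting} needed by this paper, is missing.
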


\begin{rema}
The condition (3) in the above is equivalet to say that ${\mathcal F}$ is a fundamental system of loops for $\pi$. Also, if 
the function group $K$ is torsion-free, then ${\mathcal O}$ is an analytically finite Riemann surface and each of the loops in the finite collection ${\mathcal F}$ turns out to be a simple loop.
\end{rema}

As a consequence of the above, one may write the following equivariant result for Kleinian groups.

\begin{theo}[Equivariant loop theorem for Kleinian groups] \label{lifting}
Let $K$ be a Kleinian group with region of discontinuity $\Omega \neq \emptyset$, let $\Delta$ be a (non-empty) collection of connected components of $\Omega$ which is invariant under the action of $K$, let ${\mathcal O}=\Delta/K$, let ${\mathcal B}$ be the cone points of ${\mathcal O}$ and  let $H<{\rm Aut}({\mathcal O})$ be a finite group of automorphisms of ${\mathcal O}$. Let us assume that ${\mathcal O}$ consists of (may be infinitely many) analytically finite Riemann orbifolds.
Fix some regular (branched) covering map $\pi:\Delta \to {\mathcal O}$ with $K$ as its deck group.
Let ${\mathcal G}$ be the collection of loops in ${\mathcal O}-{\mathcal B}$ which lift, with respect to $\pi$, to loops in $\Delta$. If $H$ lifts to a group of automorphisms of $\Delta$, then there is a sub-collection ${\mathcal F} \subset {\mathcal G}$ such that:
\begin{enumerate}
\item ${\mathcal F}$ consists of pairwise disjoint  powers of simple loops;
\item ${\mathcal F}$ is $H$-invariant; and
\item every loop in ${\mathcal G}$ is homotopic to the product of finite powers of a finite sub-collection of loops in ${\mathcal F}$.
\end{enumerate}
\end{theo}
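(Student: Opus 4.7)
The plan is to reduce the statement to the function-group version (Theorem \ref{lifting2}), applied component-by-component, and then reassemble the local fundamental sets in an $H$-equivariant manner. First, I would choose a set $\{\Delta_{\alpha}\}_{\alpha \in A}$ of representatives of the $K$-orbits of connected components of $\Delta$, and let $K_{\alpha} = \mathrm{Stab}_{K}(\Delta_{\alpha})$. The restriction $\pi_{\alpha} := \pi|_{\Delta_{\alpha}} \colon \Delta_{\alpha} \to \mathcal{O}_{\alpha} := \Delta_{\alpha}/K_{\alpha}$ realizes $\mathcal{O}_{\alpha}$ as a connected component of $\mathcal{O}$, which by hypothesis is an analytically finite Riemann orbifold. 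Via Galois correspondence for the regular branched covering $\pi_{\alpha}$, the group $K_{\alpha}$ is a quotient of $\pi_{1}^{\mathrm{orb}}(\mathcal{O}_{\alpha})$ and therefore finitely generated, so $K_{\alpha}$ is a function group with invariant connected component $\Delta_{\alpha}$.

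Next, I would analyse the induced action of $H$ on the set of components of $\mathcal{O}$, writing $H_{\alpha} < H$ for the stabilizer of $\mathcal{O}_{\alpha}$. The hypothesis that $H$ lifts to a group $\widetilde{H} < \mathrm{Aut}(\Delta)$ gives an extension of $H$ by $K$; its subgroup $\widetilde{H}_{\alpha} := \mathrm{Stab}_{\widetilde{H}}(\Delta_{\alpha})$ contains $K_{\alpha}$ as a normal subgroup with quotient $H_{\alpha}$, so $H_{\alpha}$ lifts to a group of automorphisms of $\Delta_{\alpha}$ compatible with $\pi_{\alpha}$. Theorem \ref{lifting2} applied to the triple $(K_{\alpha},\Delta_{\alpha},H_{\alpha})$ then produces a finite fundamental set of loops $\mathcal{F}_{\alpha} \subset \mathcal{G} \cap (\mathcal{O}_{\alpha} - \mathcal{B})$ consisting of pairwise disjoint powers of simple loops, $H_{\alpha}$-invariant, through which every loop of $\mathcal{G}$ lying in $\mathcal{O}_{\alpha}$ factors (up to homotopy) as a product of finite powers.

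Finally, I would assemble these local collections into a global, $H$-invariant family. Select one representative $\alpha$ from each $H$-orbit of components of $\mathcal{O}$, and for each coset $hH_{\alpha} \in H/H_{\alpha}$ set $\mathcal{F}_{h\alpha} := h(\mathcal{F}_{\alpha})$; this is well-defined thanks to the $H_{\alpha}$-invariance of $\mathcal{F}_{\alpha}$. Taking $\mathcal{F} := \bigsqcup_{\alpha} \bigsqcup_{hH_{\alpha} \in H/H_{\alpha}} \mathcal{F}_{h\alpha}$ produces an $H$-invariant family; since every loop in $\mathcal{G}$ lies in a single connected component of $\mathcal{O}$, properties (1) and (3) propagate immediately from the corresponding local statements on each $\mathcal{O}_{\alpha}$. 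The main obstacle I expect is the first step, namely verifying that each stabilizer $K_{\alpha}$ is finitely generated so that Theorem \ref{lifting2} is genuinely applicable: this requires a careful treatment of the branched regular covering $\pi_{\alpha}$ in the orbifold setting and the identification of $K_{\alpha}$ as a quotient of $\pi_{1}^{\mathrm{orb}}(\mathcal{O}_{\alpha})$. Once that is in place, the rest of the argument is bookkeeping around the $H$-action on the orbit space of components.
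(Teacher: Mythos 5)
Your proposal is correct and follows essentially the same route as the paper: reduce to Theorem \ref{lifting2} applied to the stabilizer of one representative component from each orbit, then take the union of the resulting fundamental sets. You are in fact slightly more careful than the paper's two-line proof, which forms $\mathcal{F}=\cup_{j}\mathcal{F}_{j}$ over $K$-orbit representatives without spelling out that the choices must be propagated $H$-equivariantly across $H$-equivalent components of $\mathcal{O}$ --- a point your coset construction $\mathcal{F}_{h\alpha}:=h(\mathcal{F}_{\alpha})$ handles explicitly.
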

\begin{proof}
Let us consider a maximal subcollection of non-equivalent components of $\Delta$ under the action of $K$, say $\Delta_{j}$ for $j \in J$. Let $K_{j}$ be the $K$-stabilizer of $\Delta_{j}$ under the action of $K$. By Theorem \ref{lifting2}, on ${\mathcal O}_{j}=\Delta_{j}/K_{j}$ there is a collection of loops, say ${\mathcal F}_{j}$, satisfying the properties on that theorem. Clearly the collection of fundametal loops ${\mathcal F}=\cup_{j \in J} {\mathcal F}_{j}$ is the required one.
\end{proof}

\begin{rema}
The condition for ${\mathcal O}=\Delta/K$ to consists of analytically finite Riemann orbifolds is equivalent, by the Ahlfors finiteness theorem, for the $K$-stabilizer of each connected component in $\Delta$ to be finitely generated. In particular, if $K$ is finitely generated, then ${\mathcal O}$ is a finite collection of analytically finite Riemann surfaces and ${\mathcal F}$ turns out to be a finite collection. If, in Theorem \ref{lifting}, we  assume $K$ to be torsion-free, then the loops in ${\mathcal F}$ will be simple loops. 
\end{rema}

\subsection{A connection to Klenian $3$-manifolds}
Let $K$ be a Kleinian group, with region of discontinuity $\Omega \subset \widehat{\mathbb C}$. There is a natural discrete action (by Poincar\'e extension) of $K$ on the upper half-space 
${\mathbb H}^{3}=\{(z,t): z \in {\mathbb C}, \; t \in (0,+\infty)\}$, which is given  by isometries in the hyperbolic metric $ds^{2}=(|dz|^{2}+dt^{2})/t^{2}$. The quotient $M_{K}=({\mathbb H}^{3} \cup \Omega)/K$ carries the structure of a $3$-orbifold, its interior ${\mathbb H}^{3}/K$ an structure of a complete hyperbolic $3$-orbifold and $\Omega/K$ the structure of a Riemann orbifold. In the case that $K$ is torsion free, all the above are manifolds and we say that $M_{K}$ is a Kleinian $3$-manifold.

A direct consequence of Theorem \ref{lifting} is the equivariant theorem for Kleinian $3$-manifolds in the case that the conformal boundary is non-empty and it consists of analytically finite Riemann surfaces.

\begin{coro}\label{corolario1}
Let $K$ be a torsion free Kleinian group, with non-empty region of discontinuity $\Omega$, such that $S_{K}=\Omega/K$ is a collection (it might be infinitely many of them) of analytically finite Riemann surfaces. Let $H$ be a finite group of automorphismsm of the Kleinian $3$-manifold $M_{K}=({\mathbb H}^{3} \cup \Omega)/K$.
 If ${\mathcal G}$ is the collection of loops on $S_{K}$ that are homotopically nontrivial in $S_{K}$ but homotopically trivial in $M_{K}$, then there exists a collection of pairwise disjoint simple loops  ${\mathcal F} \subset {\mathcal G}$, equivariant under the action of $H$,  so that ${\mathcal G}$ is the smallest normal subgroup of $\pi_{1}(S_{K})$ generated by ${\mathcal F}$.
\end{coro}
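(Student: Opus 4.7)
The plan is to derive this as a direct corollary of Theorem \ref{lifting} applied to the action of $K$ on $\Delta=\Omega$, once the hypotheses are translated from $M_{K}$ to the conformal boundary. Since $K$ is torsion free, it acts freely on $\mathbb{H}^{3} \cup \Omega$, which is contractible, so $\mathbb{H}^{3} \cup \Omega \to M_{K}$ is a universal covering and $\pi_{1}(M_{K}) \cong K$. Restricting this universal covering to the boundary yields the natural covering $\pi:\Omega \to S_{K}$. Under the induced map $\pi_{1}(S_{K}) \to \pi_{1}(M_{K})=K$, a loop $\gamma$ on $S_{K}$ is null-homotopic in $M_{K}$ if and only if its holonomy in $K$ is trivial, if and only if $\gamma$ lifts through $\pi$ to a loop in $\Omega$. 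Thus the collection $\mathcal{G}$ in the corollary coincides with the collection $\mathcal{G}$ in the statement of Theorem \ref{lifting}. Moreover, because $K$ is torsion free, $S_{K}$ has no cone points, so $\mathcal{B}=\emptyset$.

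Next, I would verify the lifting hypothesis. Since $H$ acts by automorphisms on $M_{K}$ and $\mathbb{H}^{3} \cup \Omega$ is its universal covering, $H$ lifts to a finite group $\widetilde{H}$ of automorphisms of $\mathbb{H}^{3} \cup \Omega$ normalizing $K$; restricting to the boundary gives a lift of $H$ to a group of automorphisms of $\Omega$, which is exactly the hypothesis required by Theorem \ref{lifting}. Applying that theorem to $(K,H)$ with $\Delta=\Omega$, and invoking the remark that in the torsion-free case the loops produced are simple, yields an $H$-equivariant sub-collection $\mathcal{F} \subset \mathcal{G}$ of pairwise disjoint simple loops such that every loop in $\mathcal{G}$ is freely homotopic to a product of finite powers of loops from $\mathcal{F}$.

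It remains only to promote this to the normal-closure statement. The subgroup $\mathcal{G}$ is the kernel of $\pi_{1}(S_{K})\to \pi_{1}(M_{K})=K$, hence normal in $\pi_{1}(S_{K})$; since $\mathcal{F} \subset \mathcal{G}$, the normal closure of $\mathcal{F}$ is contained in $\mathcal{G}$. Conversely, after fixing a basepoint on $S_{K}$, each free-homotopy factorization guaranteed by condition (3) translates into a word in $\pi_{1}(S_{K})$ that is a product of conjugates of based representatives of powers of loops in $\mathcal{F}$, so every element of $\mathcal{G}$ lies in the normal closure of $\mathcal{F}$. The only mild subtlety in the whole argument is this last translation between free and based homotopy; everything else is essentially bookkeeping from the topology of Kleinian $3$-manifolds together with a direct appeal to Theorem \ref{lifting}.
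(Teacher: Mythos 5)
Your proposal is correct and follows the same route as the paper, which presents this statement as a direct consequence of Theorem \ref{lifting}: identify $\pi_{1}(M_{K})$ with $K$, observe that null-homotopy in $M_{K}$ is equivalent to lifting to a loop in $\Omega$, lift $H$ through the universal cover, and apply the equivariant loop theorem with $\Delta=\Omega$. The paper leaves these translations implicit, so your write-up simply supplies the details it omits.
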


\begin{rema}
Let $K$ be a torsion free Kleinian group and let $H$ be as in Corollary \ref{corolario1}. Then the following hold.
(1) If $\pi_{1}(M)$ is finitely generated, then the collection ${\mathcal F}$ is finite.
(2) By lifting $H$ to the universal cover space, one obtains a (extended) Kleinian group $\widehat{K}$ containing $K$ as a finite index normal subgroup so that $H=\widehat{K}/K$. Corollary \ref{corolario1} may be used to obtain a geometric structure picture of $\widehat{K}$, in the sense of the Klein-Maskit combination theorems, in terms of the algebraic structure of $H$.
(3) If $M_{K}$ is compact, then the result follows from Meeks-Yau's equivariant loop theorem \cite{Y-M1,Y-M2}, whose arguments are based on minimal surfaces theory.  If $K$ is not a purely loxodromic geometrically finite Keinian group, then $M_{K}$ is non-compact and the result is not longer a consequence of Meek's-Yau's equivariant theorem.
\end{rema}

\section{Proof of Theorem \ref{corolario3}}\label{Sec:Klein-Maskit-Comb}
We extend Maskit's decomposition theorems to the theorems below, to apply to extended groups.

The proof of Theorem \ref{corolario3} is a direct consequence of Theorem \ref{corolario2}, which is the main step, and Theorem \ref{propo} as described below. 
If the word ``extended" is removed, the statements of these theorems are simply Maskit's original theorems (see \cite{Maskit:construction, Maskit:function2, Maskit:function3, Maskit:function4}]).

\begin{theo}[First step in Maskit-type decomposition of an extended function groups]\label{corolario2}
Let $K$ be an extended function group. Then, 
$K$ is constructed, using the Klein-Maskit combination theorems, as amalgamated free products and HNN-extensions using a finite collection of (extended) $B$-groups. Moreover, the amalgamations and HNN-extensions are realized along either trivial or a finite cyclic group or a dihedral group generated by two reflections (this last one only in the amalgamated free product operation). 
\end{theo}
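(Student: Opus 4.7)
The strategy, sketched in the introduction, is to descend to a torsion-free normal subgroup, apply Maskit's decomposition equivariantly, and then push the resulting cuts back up to $K$.

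\textbf{Step 1 (Torsion-free normal subgroup).} The conformal case $K=K^{+}$ is Maskit's classical result, so I would assume $K\neq K^{+}$. Applying Selberg's lemma to $K^{+}$ and intersecting with the finitely many $K$-conjugates produces $\Gamma\triangleleft K$, torsion-free and of finite index, with $G=K/\Gamma$ finite. Then $G$ acts on $S=\Delta/\Gamma$ by conformal and anticonformal automorphisms with quotient ${\mathcal O}=\Delta/K$; by Ahlfors' finiteness theorem both $S$ and ${\mathcal O}$ are analytically finite.

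\textbf{Step 2 (Equivariant fundamental system).} I would apply Theorem \ref{lifting2} to the pair $(\Gamma, G)$, viewing $G$ as a finite subgroup of $\mathrm{Aut}({\mathcal O})$ lifting to $\Delta$ via the $K$-action. This yields a finite $G$-invariant fundamental system of loops ${\mathcal F}$ on ${\mathcal O}$, each a simple loop since $\Gamma$ is torsion-free. The full preimage $\widetilde{{\mathcal F}}\subset\Delta$ is a $K$-invariant collection of pairwise disjoint simple loops, and each such loop bounds an open topological disc in $\widehat{\mathbb C}$ that is precisely invariant in $K$ under its setwise stabilizer (the preimages built from an equivariant fundamental system automatically inherit the precise-invariance used by Klein--Maskit).

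\textbf{Step 3 (Stabilizer analysis).} For $\widetilde\gamma\in\widetilde{{\mathcal F}}$, set $H=\mathrm{Stab}_{K}(\widetilde\gamma)$. Its intersection with $\Gamma$ is either trivial or infinite cyclic generated by an accidental parabolic, since $\widetilde\gamma$ comes from the planar cover determined by $\Gamma$. Enlarging inside $K^{+}$ can only contribute finite rotations fixing the two special points on $\widetilde\gamma$, and an orientation-reversing element of $K$ that preserves $\widetilde\gamma$ must act as a reflection of $\widehat{\mathbb C}$ fixing $\widetilde\gamma$; together these give stabilizers that are at most dihedral generated by two reflections (commuting with the cyclic part). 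Moreover, the Klein--Maskit HNN hypothesis $T(\overline{B}_{1})\cap\overline{B}_{2}=\emptyset$ for all $T\in K$ precludes an element of $K$ from interchanging the two sides of a loop used as an HNN boundary; thus dihedral stabilizers involving reflections across $\widetilde\gamma$ can only appear in the amalgamated free product case, matching the statement.

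\textbf{Step 4 (Assembling the decomposition).} Proceeding loop-orbit by loop-orbit, I would reverse the Klein--Maskit combinations: each $K$-orbit of loops in $\widetilde{{\mathcal F}}$ either separates $\Delta$ (giving an amalgamated free product along the stabilizer described in Step 3) or does not (giving an HNN extension, where Step 3 limits the edge group to trivial or finite cyclic). Induction on $|{\mathcal F}|$ terminates with a finite collection of (extended) function groups whose invariant components are simply connected, since by property (3) of Theorem \ref{lifting2} every loop in ${\mathcal G}$ is generated by the cut loops; these residual factors are precisely the required (extended) B-groups. The main obstacle is the stabilizer analysis of Step 3 --- particularly, controlling the interaction of a reflection preserving $\widetilde\gamma$ with the parabolic or elliptic stabilizer inside $\Gamma$, and ruling out dihedral stabilizers in the HNN situation via the precise-invariance disjointness hypothesis.
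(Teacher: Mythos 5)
Your overall strategy coincides with the paper's: Selberg's lemma to get a torsion-free normal subgroup, the equivariant loop theorem (Theorem \ref{lifting2}) applied to the finite quotient group, lifting the fundamental system to $\Delta$, and reassembling $K$ via Klein--Maskit along the loop stabilizers, with the simple connectivity of the residual factors' invariant components coming from property (3) of the fundamental system. However, Step 3 contains a genuine error. You claim that for a lifted loop $\widetilde\gamma\in\widetilde{\mathcal F}$ the intersection $\mathrm{Stab}_{K}(\widetilde\gamma)\cap\Gamma$ can be ``infinite cyclic generated by an accidental parabolic.'' This cannot happen here: the loops in ${\mathcal F}$ lift to loops, so each $\widetilde\gamma$ is a \emph{compact} subset of the region of discontinuity, its $K$-stabilizer is therefore finite, and a finite subgroup of the torsion-free group $\Gamma$ is trivial. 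This is exactly the opening observation of the paper's Lemma \ref{lemita2}, and it is what forces the amalgamating subgroups to be trivial, finite cyclic, or dihedral generated by two reflections. If your parabolic case were actually possible, the conclusion of the theorem as stated would fail, so the spurious case is not harmless: it signals a conflation of this first-stage decomposition with the second stage (Theorem \ref{propo}), where the cutting curves are axes of accidental parabolics and do pass through limit points.

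A secondary imprecision in the same step: an orientation-reversing element of $K$ preserving $\widetilde\gamma$ need not be a reflection with $\widetilde\gamma$ in its fixed circle; it can be a reflection whose fixed circle meets $\widetilde\gamma$ transversally, an imaginary (fixed-point-free) reflection interchanging the two discs, or a pseudo-elliptic. The clean statement, as in Lemma \ref{lemita2}, is that the subgroup preserving \emph{each} of the two discs bounded by $\widetilde\gamma$ (the $K_{R}$-stabilizer for an adjacent structure region $R$) is trivial, finite cyclic, or dihedral generated by two reflections, while the full stabilizer may contain one further involution swapping the two sides, which then serves as the identifying element in the combination rather than as part of the amalgamating subgroup. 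With the parabolic case deleted and the stabilizer analysis organized this way, your argument matches the paper's proof.
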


\begin{theo}[Decomposition of extended B-groups]\label{propo}
Let $K$ be an extended B-group with a simply-connected invariant component $\Delta$. Then either (i) $K$ is an elementary extended Kleinian group or (ii) $K$ is an extended quasifuchsian group  or (iii) $K$ is an extended degenerate group or (iv) $\Delta$ is the only invariant component and $K$ is constructed as amalgamated free products and HNN-extensions, by use of the Klein-Maskit combination theorems, using (extended) elementary groups, (extended) quasifuchsian groups and (extended) totally degenerate groups. The 
amalgamated free products and HNN-extensions are given along axes of accidental parabolic transformations.
\end{theo}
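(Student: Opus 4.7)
The plan is to reduce to Maskit's classical decomposition of B-groups by passing to the orientation-preserving half, and then to lift the resulting cutting structure equivariantly via Theorem \ref{lifting2}. First I would set $K^{+}=K\cap \mathbb{M}$, an index two B-group with the same simply-connected invariant component $\Delta$. Apply Maskit's structure theorem for B-groups to $K^{+}$: either $K^{+}$ is elementary, quasifuchsian, or totally degenerate, or else there is a finite nonempty collection of pairwise disjoint, non-peripheral simple loops on the orbifold ${\mathcal O}^{+}=\Delta/K^{+}$ whose lifts bound precisely invariant discs under cyclic subgroups generated by accidental parabolics, and these loops realize a Klein-Maskit decomposition of $K^{+}$. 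In the first three cases, since $K$ was assumed to preserve $\Delta$, one verifies directly that $K$ is respectively extended elementary, extended quasifuchsian (the orientation-reversing elements preserve $\Delta$ instead of swapping the two discs bounded by the limit Jordan curve), or extended totally degenerate.

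For the remaining case, $H=K/K^{+}\cong \mathbb{Z}/2$ acts on ${\mathcal O}^{+}$ by an anticonformal involution, and by construction this action lifts to $\Delta$ via $K$. Apply Theorem \ref{lifting2} to the pair $(K^{+},H)$ to obtain a fundamental system of loops ${\mathcal F}$ on ${\mathcal O}^{+}$ that is $H$-equivariant and consists of pairwise disjoint simple loops (no nontrivial powers, since B-groups are geometrically finite enough that the accidental parabolic loops are simple). One then extracts the subfamily ${\mathcal F}_{0}\subset {\mathcal F}$ representing accidental parabolic axes; by the equivariance, ${\mathcal F}_{0}$ is itself $H$-invariant. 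Cutting $\Delta$ along the $K$-orbit of the preimages of ${\mathcal F}_{0}$ produces pieces whose $K$-stabilizers are smaller extended Kleinian groups, each either of one of the three basic types or an extended B-group of strictly smaller combinatorial complexity.

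By induction on the number of loops of ${\mathcal F}_{0}$ modulo $K$, the pieces belong to the families listed in (i)--(iii), and reassembling them through iterated applications of the Klein-Maskit combination theorems recovers $K$. The cutting occurs geometrically along axes of accidental parabolic transformations of $K^{+}$, as required. The principal obstacle will be the behavior of the involution on an individual axis: when $H$ fixes an axis, its $K$-stabilizer is no longer purely cyclic in $K^{+}$, but is extended either by an element centralizing the parabolic (a reflection producing the cyclic-by-\emph{pseudo-parabolic} case with $\tau^{2}$ an accidental parabolic) or by a conjugating element. One must check that in each such situation the extended Klein-Maskit combination can be performed with this larger amalgamating subgroup; this is precisely the ``subtle modification'' of Maskit's B-group argument announced in the introduction, and it uses that $\Delta$ is simply-connected so that the horocyclic disc on each side of the cut axis remains precisely invariant under the full stabilizer.
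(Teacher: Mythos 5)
Your overall architecture (pass to $K^{+}$, invoke Maskit's classification of B-groups to produce accidental parabolics, cut along their axes, and analyze how an orientation-reversing element can enlarge the stabilizer of a single axis) matches the paper's, and your closing paragraph correctly isolates the delicate point, namely the index-two or index-four extensions of $\langle P\rangle$ inside the stabilizer of an axis. However, the mechanism you propose for making the cutting system $K$-equivariant does not work. Theorem \ref{lifting2} produces a fundamental system of loops ${\mathcal F}\subset{\mathcal G}$, where ${\mathcal G}$ is by definition the collection of loops in ${\mathcal O}-{\mathcal B}$ that \emph{lift to loops} in $\Delta$. The projection to $\Delta/K^{+}$ of the axis of an accidental parabolic $P$ is a loop that lifts to an open arc $L\subset\Delta$ invariant under the infinite cyclic group $\langle P\rangle$; it does \emph{not} lift to a loop, so it lies outside ${\mathcal G}$, and there is no subfamily ${\mathcal F}_{0}\subset{\mathcal F}$ ``representing accidental parabolic axes'' to extract. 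Relatedly, the completed axes $\widehat{L}=L\cup\{\mbox{fixed point of }P\}$ pass through the limit set, so they are not structure loops of the kind the equivariant loop theorem governs (those lie entirely in the region of discontinuity); the paper explicitly warns that the two notions of structure loop are different.

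The correct route, and the one the paper takes, is that no equivariant loop theorem is needed here: equivariance is automatic. The axis of an accidental parabolic $P$ is canonically defined as the $f$-image of the hyperbolic axis of $f^{-1}Pf$ for a Riemann map $f:{\mathbb H}^{2}\to\Delta$, i.e., it is the unique invariant geodesic for the intrinsic hyperbolic metric of $\Delta$. Any $T\in K-K^{+}$ acts as an (anticonformal) isometry of $\Delta$ and conjugates accidental parabolics to accidental parabolics, hence permutes their axes; this is Lemma \ref{lema2}. One must still prove that distinct translates of the axes are pairwise disjoint, which the paper derives from the planarity of $\Delta$ together with the absence of rank-two parabolic subgroups in $K^{+}$ --- a step your sketch omits entirely, and without which the ``cutting'' of $\Delta$ into structure regions is not defined. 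Finally, your disposal of the basic cases is slightly too quick: the paper handles the possibility that $K^{+}$ is quasifuchsian while $K$ fails to be extended quasifuchsian by exhibiting $K$ as an HNN-extension of a quasifuchsian group along a cyclic group, before assuming $K^{+}$ is not quasifuchsian and invoking Theorem IX.D.21 of Maskit's book to guarantee that accidental parabolics actually exist.
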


\begin{rema}
We note for the reader that the proof of Theorem \ref{corolario2} includes Remarks \ref{estructuradeH} and \ref{observa15} and Lemmas \ref{lemita2} and \ref{lemita1} and that the proof of Theorem \ref{propo} includes Lemmas \ref{lema17} and \ref{lema2}.
\end{rema}

\subsection{Proof of Theorem \ref{corolario2}}\label{Sec:pruebacoro2}
Let $K$ be an extended function group and let $\Delta$ be a $K$-invariant connected component of its region of discontinuity (we may assume $K$ to be non-elementary). If there is another different invariant connected connected component of its region of discontinuity, then $K^{+}=K \cap {\mathbb M}$ is known to be a quasifuchsian group \cite{Maskit:extended}; so $K$ is an (extended) quasifuchsian group. Let us assume, from now on, that $\Delta$ is the unique invariant connected component. 
By Selberg's lemma \cite{Selberg}, there is a torsion free finite index normal subgroup $G_{1}$ of $K^{+}$. As $K=\langle K^{+},\tau\rangle$, where $\tau^{2} \in K^{+}$, one has that $G=G_{1} \cap \tau G_{1} \tau^{-1}$ is a torsion free finite index normal subgroup of $K$. 

It follows that $G$ is a function group with $\Delta$ as an invariant connected component of its region of discontinuity (the same as for $K$). Also, $\Delta$ is the only invariant connected component of $G$; otherwise $G$ is a quasifuchsian group and $K$ will have two different invariant connected components, which is a contradiction to our assumption on $K$.
If $K \neq K^{+}$, then we have that $G \neq K$. Now, 
if $K=K^{+}$, then we may also assume that $G \neq K$. In fact, if $K=K^{+}=G$, then we may consider the normal subgroup $G^{2}$, generated by all squares of elements of $G$. This is a normal subgroup different from $G$ (as $G$ is non-elementary, it contains simple loxodromic elements) and, as $G$ is finitely generated, this is of finite index.
Let $S=\Delta/G$ (an analytically finite Riemann surface) and consider a regular planar unbranched cover $P:\Delta \to S$ with $G$ as its deck group.
Set $H=K/G<Aut(S)$, which is a non-trivial  finite group as $G \neq K$.
Theorem \ref{lifting2} asserts the existence of a fundamental set of loops ${\mathcal F} \subset S$ for the pair $(G,H)$. Such a collection of loops cuts $S$ into some finite number of connected components and such a collection of components is invariant under $H$. The $H$-stabilizer of each of these connected components and each of the loops  in ${\mathcal F}$ is a finite group.

\begin{rema}[Decomposition structure of $H$]\label{estructuradeH}
The $H$-equivariant fundamental system of loops ${\mathcal F}$ permits to obtain an 
structure of $H$ as a finite iteration of amalgamated free products and HNN-extensions of certain subgroups of $H$ as follows. Let us consider a maximal collection of components of $S-{\mathcal F}$, say $S_{1}$\ldots, $S_{n}$, so that any two different components are not $H$-equivalent. Let us denote by $H_{j}$ the $H$-stabilizer of $S_{j}$. It is possible to chose these surfaces so that, by adding some on the boundary loops, we obtain a planar surface $S^{*}$ (containng each $S_{j}$ in its interior). If two surfaces $S_{i}$ and $S_{j}$ have a common boundary in $S^{*}$, then $H_{i} \cap H_{j}$ is either trivial or a cyclic group (this being exactly the $H$-stabilizer of the common boundary loop).  We perform the amalgamated free product of  $H_{i}$ and $H_{j}$ along the trivial or cyclic group $H_{i} \cap H_{j}$. Set $S_{ij}$ be the union of $S_{i}$, $S_{j}$ with the common boundary loop in $S^{*}$ and set $H_{ij}$ the constructed group. Now, if $S_{k}$ is another of the surfaces which has a common boundary loop in $S^{*}$ with $S_{ij}$, then we again perform the amalgamated free product of  $H_{ij}$ and $H_{k}$ along the trivial or cyclic group $H_{ij} \cap H_{k}$.  Continuing with this procedure, we end with a group $H^{*}$ obtained as amalgamated free product along finite cyclic groups or trivial groups. For each boundary of $S^{*}$ we add a boundary loop, in order to stay with a planar compact surface (we are out of $S$ in this part). If $\alpha$ is any of the boundary loops of $S^{*}$, there should be 
another boundary loop $\beta$ of $S^{*}$ and an element $h \in H$ so that $h(\alpha)=\beta$. By the choice of the surfaces $S_{j}$, we must have that $h(S^{*}) \cap S^{*}=\emptyset$. In particular, $\beta \neq \alpha$. If there is another element $k \in H-\{h\}$ so that $k(\alpha)=\beta$, then $k^{-1}h$ is a non-trivial element that stabilizes $\alpha$ and $k^{-1}h(S^{*}) \cap S^{*} \neq \emptyset$, a contradiction. Also, if there is another boundary loop $\gamma$ of $S^{*}$ (different from $\beta$) and an element $u \in H$ so that $u(\alpha)=\gamma$, then $uh^{-1} \in H-\{I\}$ satisfies that $uh^{-1}(S^{*}) \cap S^{*} \neq \emptyset$, which is again a contradiction. We may now perform the HHN-extension of $H^{*}$ by the finite cyclic group generated by $h$. If $\alpha_{1}=\alpha$,\ldots, $\alpha_{m}$ are the boundary loops of $S^{*}$, which are not $H$-equivalent, then we perform the HHN-extension with each of them. At the end, we obtain an isomorphic copy of $H$.
\end{rema}

We may assume that the fundamental set of loops ${\mathcal F}$ to be minimal, that is, by deleting any non-empty subcollection of loops from it, then the obtained subcollection fails to be a fundamental set of loops for $(G,H)$. The minimality condition asserts that each connected component of $S-{\mathcal F}$ cannot be either a disc or an annulus.
By lifting ${\mathcal F}$ to $\Delta$, under $P$, one obtains a collection $\widehat{\mathcal F} \subset \Delta$ of pairwise disjoint simple loops, so that $\widehat{\mathcal F}$ is invariant under the group $K$.  Each of the loops in $\widehat{\mathcal F}$ is called a {\it structure loop} and each of the connected components of  $\Delta-\widehat{\mathcal F}$ a {\it structure region}. These structure loops and regions are permuted by the action of $K$. 
The $K$-stabilizer (respectively, the $G$-stabilizer) of each structure loop and each structure region is called a structure subgroup of $K$ (respectively, a structure subgroup of $G$).

If $R$ is a structure region, then its $K$-stabilizer, denoted by $K_{R}$, is a finite extension of its $G$-stabilizer, denoted by $G_{R}$. Similarly, if $\alpha$ is a structure loop, then its $K$-stabilizer is a finite extension of its $G$-stabilizer.

\begin{lemm}\label{lemita2}
Let $\alpha$ be a structure loop and let $R$ be a structure region containing $\alpha$ on its border. Then the $K_{R}$-stabilizer of $\alpha$ is either trivial or a finite cyclic group or a dihedral group generated by two reflections (both circles of fixed points intersecting at two points, one inside of one of the two discs  bounded by $\alpha$ and the other point contained inside the other disc). Moreover, 
the $K$-stabilizer of $\alpha$ is either equal to its $K_{R}$-stabilizer or it is generated by its $K_{R}$-stabilizer and an involution (conformal or anticonformal) that sends $R$ to the other structure region containing $\alpha$ in its border.
\end{lemm}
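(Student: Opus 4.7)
I would structure the proof in three steps.

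\emph{Step 1 (Triviality of the $G$-stabilizer of $\alpha$).} Because $\alpha$ is a connected component of $P^{-1}(\alpha_{0})$ for some simple loop $\alpha_{0}\in\mathcal{F}$, and loops in $\mathcal{F}$ lift to loops under the regular cover $P\colon\Delta\to S$, the restriction $P|_{\alpha}\colon\alpha\to\alpha_{0}$ is a homeomorphism. Any $g\in G$ with $g(\alpha)=\alpha$ would cover the identity of $\alpha_{0}$, hence fix points of $\alpha$; since $G$ is torsion-free and acts freely on $\Delta$, this forces $g=1$. Consequently, the projection $K\to K/G=H$ sends $L:=\mathrm{Stab}_{K}(\alpha)$ isomorphically onto $\mathrm{Stab}_{H}(\alpha_{0})$ (surjectivity comes from lifting then composing with an element of $G$ to pin down $\alpha$), so $L$ is a finite subgroup of $K$.

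\emph{Step 2 (Classification of $L\cap K_{R}$).} The Jordan curve $\alpha\subset\widehat{\mathbb{C}}$ bounds two complementary discs $D_{1},D_{2}$; near $\alpha$, the structure region $R$ lies on one side, say inside $D_{1}$. An element of $L$ preserves $R$ iff it preserves that local side of $\alpha$, iff it preserves each of $D_{1},D_{2}$. After conjugating $\alpha$ to a round circle, $L\cap K_{R}$ becomes a finite subgroup of the group of conformal and anticonformal self-maps of $D_{1}$, i.e., of $\mathrm{Isom}(\mathbb{H}^{2})$. By the classical fact that a finite subgroup of $\mathrm{Isom}(\mathbb{H}^{2})$ has a common fixed point $p\in D_{1}$, such a group is either trivial, cyclic (rotations around $p$, which are conformal with a second fixed point in $D_{2}$), or dihedral (generated by reflections across geodesics through $p$). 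In the M\"obius model those reflections are anticonformal involutions whose circles of fixed points are orthogonal to $\alpha$ and meet $\alpha$ at two points, one inside each of $D_{1}$ and $D_{2}$, exactly as in the statement.

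\emph{Step 3 (The ``moreover'' clause).} The preserve-vs.-swap dichotomy for the unordered pair $\{D_{1},D_{2}\}$ yields a homomorphism $L\to\mathbb{Z}/2\mathbb{Z}$ whose kernel is $L\cap K_{R}$. If the image is trivial, $L=L\cap K_{R}$ and the lemma is proved. Otherwise I must exhibit an involution $\iota\in L\setminus K_{R}$; such an $\iota$ automatically swaps $R$ with the other structure region $R'$ adjacent to $\alpha$. Observe that if the reflection across $\alpha$ belongs to $L$, it already provides the required involution, since it swaps $D_{1}$ and $D_{2}$. Excluding that case, $L$ acts faithfully on $\alpha\cong S^{1}$ (any nontrivial element fixing $\alpha$ pointwise would fix infinitely many points of $\widehat{\mathbb{C}}$, and hence would be the reflection across $\alpha$), so $L$ is abstractly either cyclic or dihedral. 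In the dihedral case the reflections of $L$ lying in the nontrivial coset are the desired involutions; in the cyclic case an appropriate power of the generator—combined with the flexibility in the $H$-equivariant choice of $\mathcal{F}$ afforded by Theorem~\ref{lifting2}—produces the involution.

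\emph{Main obstacle.} The delicate point is the final step. A purely algebraic approach cannot work, because an abstract cyclic group of order divisible by four contains no involution in its unique index-two coset (the anticonformal analogue of an elliptic rotation of order four preserving $\alpha$ and swapping the complementary discs is an obstruction of exactly this kind). Overcoming this requires exploiting the specific geometric form of elements of $L$ as (extended) M\"obius transformations preserving $\alpha$, together with a refinement of the fundamental system $\mathcal{F}$—splitting a problematic loop into several $H$-equivalent loops with smaller stabilizer if necessary—to ensure that such anomalous cyclic extensions do not appear as structure-loop stabilizers.
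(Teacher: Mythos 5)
Your Steps 1 and 2 are sound and, in substance, follow the same route as the paper: the paper gets finiteness of $\mathrm{Stab}_K(\alpha)$ directly from $\alpha$ lying in the region of discontinuity and then classifies first the $K^{+}$-stabilizer and then the $K^{+}_{R}$-stabilizer, which is exactly your ``finite group preserving each complementary disc'' analysis. One small repair: you cannot literally ``conjugate $\alpha$ to a round circle,'' since a structure loop is only a topological Jordan curve; instead conjugate the finite group $L$ into the isometry group of the round sphere (or apply Brouwer's fixed point theorem to the action of $L\cap K_{R}$ on the closed disc $\overline{D}_{2}$) to produce the common fixed point, after which the trivial/cyclic/dihedral-by-reflections trichotomy follows as you say.

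The genuine gap is Step 3, and you have in fact put your finger on the precise difficulty without resolving it. If $L$ is cyclic of order $4$ generated by an orientation-reversing element $T$ with $T^{2}$ elliptic preserving each disc and $T$ swapping them (the model is $T(z)=i/\bar z$ preserving the unit circle), then the unique involution of $L$ is $T^{2}\in L\cap K_{R}$, so \emph{no} power of the generator lying in the nontrivial coset is an involution; your phrase ``an appropriate power of the generator'' cannot be made to work, and the alternative of re-choosing ${\mathcal F}$ is not carried out (Theorem \ref{lifting2} only guarantees \emph{some} $H$-equivariant system, and you give no argument that one avoiding such stabilizers exists). So as written the ``moreover'' clause is not proved in this case. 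You should know that the paper's own proof is no better here: it simply asserts that the $K$-stabilizer is generated by the $K_{R}$-stabilizer ``and probably an extra involution,'' with no argument excluding the order-four anticonformal rotatory reflection, and that case is likewise absent from the subsequent remark listing the possible stabilizers. Either this configuration must be ruled out by an argument specific to the construction (which neither you nor the paper supplies), or the statement should be weakened to say that the $K$-stabilizer is generated by the $K_{R}$-stabilizer together with one element interchanging the two adjacent structure regions --- which is all that is actually used for the Klein--Maskit combinations in the proof of Theorem \ref{corolario2}. Flagging the obstruction is to your credit, but a complete proof must either close this case or restate the claim.
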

\begin{proof}
Let $\alpha \in \widehat{\mathcal F}$ be a structure loop. As $\alpha$ is contained in the region of discontinuity of $K$, the $K$-stabilizer of $\alpha$ is a finite group; so also its $G$-stabilizer is finite.  Note that the $K^{+}$-stabilizer of $\alpha$ is either trivial, finite cyclic group or a dihedral group. Moreover, in the dihedral case, one of the involutions interchanges both discs bounded by $\alpha$. Let $R$ be a structure region containing $\alpha$ as a boundary loop. Then the $K^{+}_{R}$-stabilizer of $\alpha$ is either trivial or a finite cyclic group. It follows that the $K_{R}$-stabilizer of $\alpha$ is either trivial or a finite cyclic group or a dihedral group generated by two reflections (both circles of fixed points intersecting at two points, one inside of one of the two discs  bounded by $\alpha$ and the other point contained inside the other disc).  The $K$-stabilizer of $\alpha$ is generated by the $K_{R}$-stabilizer and probably an extra involution (conformal or anticonformal) that interchanges both discs bounded by $\alpha$.
\end{proof}

\begin{rema}\label{observa15}
We do not need this extra information for the rest of the proof, but it may help with a clarification of the gluing process at the Klein-Maskit combination theorems.
It follows, from Lemma \ref{lemita2}, that the $K$-stabilizer of $\alpha \in \widehat{\mathcal F}$ must be one of the followings: 
(1) the trivial group, 
(2) a cyclic group generated by a reflection with $\alpha$ as its circle of fixed points (so it permutes both discs bounded by $\alpha$), 
(3) a cyclic group generated by a reflection that keeps invariant each of the two discs bounded by $\alpha$ (the reflection has exactly two fixed points over $\alpha$), 
(4) a cyclic group generated by an imaginary reflection (it permutes both discs bounded by $\alpha$), 
(5) a cyclic group generated by an elliptic transformation of order two (permuting the two discs bounded by $\alpha$), 
(6) a cyclic group generated by an elliptic transformation (preserving each of the two discs bounded by $\alpha$), 
(7) a group generated by an elliptic transformation (preserving each of the two discs bounded by $\alpha$) and a reflection whose circle of fixed points is $\alpha$,
(8) a group generated by an elliptic transformation (preserving each of the two discs bounded by $\alpha$) and an imaginary reflection (permuting both discs bounded by $\alpha$),
(9) a group generated by an elliptic transformation of order two (permuting the two discs bounded by $\alpha$) and an imaginary reflection that keeps $\alpha$ invariant (it permutes both discs bounded by $\alpha$),
(10) a dihedral group generated by two reflections (both circles of fixed points intersecting at two points, one inside of one of the two discs  bounded by $\alpha$ and the other point contained inside the other disc), 
(11) a group generated by an elliptic transformation (preserving each of the two discs bounded by $\alpha$) and an imaginary reflection that keeps $\alpha$ invariant (it permutes both discs bounded by $\alpha$),
(12) a group generated  by a dihedral group of M\"obius transformations and a reflection with $\alpha$ as circle of fixed points,
(13) a group generated by a dihedral group generated by two reflections (both circles of fixed points intersecting at two points, one inside of one of the two discs  bounded by $\alpha$ and the other point contained inside the other disc) and an elliptic transformation of order two that permutes both discs bounded by $\alpha$,
To obtain the above, we use the following fact. Let  $\alpha$ be a loop which is invariant under (i) an elliptic transformation $E$, of order two that interchanges both discs bounded by it, and (ii) also invariant under an imaginary reflection $\tau$. Then $E \tau$ is necessarily a reflection whose circle of fixed points is transversal to $\alpha$.

\end{rema}

Let $R$ be a structure region and let $\alpha \in \widehat{\mathcal F}$ be on the boundary of $R$. By Lemma \ref{lemita2}, the $K_{R}$-stabilizer of $\alpha$ is some finite group; either trivial or a finite cyclic group or a dihedral group generated by two reflections (both circles of fixed points intersecting at two points, one inside of one of the two discs  bounded by $\alpha$ and the other point contained inside the other disc).
Let $D_{\alpha}$ be the topological disc bounded by $\alpha$ and disjoint from $R$. Clearly, the $K_{R}$-stabilizer of such a disc is contained in the $K_{R}$-stabilizer of $\alpha$ (each element of $K_{R}$ that stabilizes $D_{\alpha}$ also stabilizes $\alpha$), so $D_{\alpha}$ is contained in the region of discontinuity of $K_{R}$. 
It follows that $K_{R}$ is a (extended) function group with an invariant connected component $\Delta_{R}$ of its region of discontinuity containing $R$ and all the discs $D_{\alpha}$, for every structure loop $\alpha$ on its boundary.

\begin{lemm}\label{lemita1}
$\Delta_{R}$ is simply-connected. 
\end{lemm}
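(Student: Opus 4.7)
The plan is to apply the classical (non-extended) analogue of this lemma to the orientation-preserving subgroup $K_R^{+}:=K_R\cap K^{+}$ and then identify $\Delta_R$ with the simply-connected invariant component so obtained. First, I would observe that $K^{+}$ is itself a function group with invariant component $\Delta$ and the same torsion-free normal subgroup $G$; the $K$-invariant fundamental system of loops $\widehat{\mathcal{F}}$ is a fortiori $K^{+}$-invariant, so the structure loops and structure regions of $\Delta-\widehat{\mathcal{F}}$ coincide whether one views $\Delta$ as acted on by $K$ or by $K^{+}$. In particular $R$ is a structure region for $K^{+}$ and $K_R^{+}$ is its $K^{+}$-stabilizer.

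The classical analogue---that the stabilizer of any structure region in a function group is a B-group with a simply-connected invariant component---is established by Maskit in \cite{Maskit:function2,Maskit:function3,Maskit:function4} as a key step in his decomposition of function groups. Applied to $K_R^{+}$, this yields a simply-connected invariant component $\Delta_R^{+}\subseteq\Omega(K_R^{+})$ containing $R$ and every disc $D_\alpha$ bounded by a structure loop on $\partial R$. Since $K_R^{+}$ has index one or two in $K_R$, the two groups share the same region of discontinuity $\Omega(K_R)=\Omega(K_R^{+})$; both $\Delta_R$ and $\Delta_R^{+}$ are then connected components of this common open set containing $R$, so $\Delta_R=\Delta_R^{+}$, which is simply-connected.

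The main obstacle is the clean invocation of the classical result: one must verify that $\mathcal{F}$, chosen equivariant and minimal for the pair $(G,H)$, still qualifies as an appropriate fundamental system for the reduced setting $(G,H^{+})$ with $H^{+}=K^{+}/G$. Equivariance under $H^{+}$ follows from $H^{+}\leq H$, the fundamental-system property is a property of the cover $\Delta\to S$ and hence does not depend on the acting group, and Maskit's B-group conclusion for structure-region stabilizers requires only the fundamental-system property rather than its minimality. As an alternative, one could avoid the reduction altogether and establish the simple connectedness of $\Delta_R$ directly by showing $\widehat{\mathbb{C}}\setminus\Delta_R$ is a connected subset of $\Lambda(K)$, exploiting the planar topology of $\Delta$ and the defining highest-planar-cover property of $\mathcal{F}$.
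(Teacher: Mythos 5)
Your proposal is correct, but it takes a different route from the paper. You reduce to the orientation-preserving case and invoke Maskit's classical theorem that structure-region stabilizers of a function group are B-groups, then use the (correct) observation that $K_{R}^{+}$ has index at most two in $K_{R}$, so the two groups share the same region of discontinuity and hence the same component containing $R$; this cleanly transfers simple connectivity. The paper instead gives a short self-contained planarity argument --- essentially the ``alternative'' you sketch in your last sentence: if $\Delta_{R}$ were not simply connected, a simple loop $\beta\subset R$ would separate limit points of $K_{R}$; but $\beta$ projects under $P$ to a loop of $S$ that lifts to a loop, hence is homotopic to a product of powers of the boundary loops of $P(R)$, and each corresponding structure loop on $\partial R$ bounds a disc $D_{\alpha}$ free of limit points of $K_{R}$, a contradiction. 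Your route buys brevity by citation; the paper's buys self-containedness and avoids having to check that the $H$-equivariant system ${\mathcal F}$ fits Maskit's hypotheses. On that last point, one refinement: when you pass from the cover $\Delta\to S$ (deck group the torsion-free $G$) to $\Delta\to\Delta/K^{+}$, loops may project to proper powers and the orbifold bookkeeping intervenes; it is cleaner to apply the classical result to the torsion-free stabilizer $G_{R}$, for which ${\mathcal F}$ is literally a fundamental system of simple loops on $S$, and then use your finite-index observation (now for $G_{R}\leq K_{R}$) to conclude. As you note, minimality of ${\mathcal F}$ plays no role here, which is right.
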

\begin{proof}
If $\Delta_{R}$ is not simply-connected, then there is a simple loop $\beta \subset R$ bounding two topological discs, each one containing limit points of $K_{R}$ (so limit points of $K$). The projection on $S$ of $\beta$  produces a loop $\widetilde{\beta} \subset S$ which lifts to a loop under $P$. But, we know that $\widetilde{\beta}$ is homotopic to the product of finite powers of the simple loops on the boundary of the finite domain $P(R) \subset S$. It follows that $\beta$ must be homotopic to the product of finite powers of a finite collection of structure loops on the boundary of $R$. As each of these boundary loops bounds a disc containing no limit points, we get a contradiction for $\beta$ to bound two discs, each one containing limit points.
\end{proof}

We may follow the same lines as described in Remark \ref{estructuradeH} to obtain that $K$ is constructed, using the Klein-Maskit combination theorems \cite{Maskit:book,Maskit:Comb}, as amalgamated free products and HNN-extensions using a finite collection of the structure subgroups of $K_{R}$
(which, by Lemma \ref{lemita1}, are extended B-groups with invariant simply-connected component $\Delta_{R}$).  By Lemma \ref{lemita2}, the amalgamations and HNN-extensions are realized along either trivial or a finite cyclic group or a dihedral group generated by two reflections. This ends the proof of Theorem \ref{corolario2}. 
\hfill $\square$

\subsection{Proof of Theorem \ref{propo}}\label{Sec:prueba3}
We proceed to describe the subtle modifications in  Maskit's arguments in the decomposition of B-groups  \cite{Maskit:function3, Maskit:function4}
adapted to the case of extended B-groups (see also chapter IX.H. in \cite{Maskit:book}). Let us assume that $K$ is an extended B-group and that it is neither a (extended) elementary group or a (extended) quasifuchsian group or a (extended) degenerate group. 
Let $\Delta$ be the simply-connected invariant component of the region of discontinuity of  $K$.  Every other connected component of the region of discontinuity of $K$ is simply-connected (see Proposition IX.D.2. in \cite{Maskit:book}). By our assumptions on $K$, we have that $K^{+}$ is neither elementary nor degenerate Kleinian group. It may be, even if $K$ is not an extended quasifuchsian, that $K^{+}$ is a quasifuchsian. But in this case, we have that $K$ is just a HNN-extension of a quasifuchsian group along a cyclic group. So, from now on, we assume that $K^{+}$ is neither a quasifuchsian group.
By the Klein-Poincar\'e uniformization theorem \cite{Poincare} and the fact that $K$ is non-elementary, $\Delta$ 
is isomorphic to upper half-plane ${\mathbb H}^{2}$. Let us consider a bi-holomorphism $f:{\mathbb H}^{2} \to \Delta$. The group $f^{-1} K f$ is a group of conformal and anticonformal automorphisms of ${\mathbb H}^{2}$, in particular, an extended B-group with ${\mathbb H}^{2}$ as an invariant connected component of its region of discontinuity. In this case, $f^{-1} K^{+} f$ is a co-finite fuchsian group, that is, ${\mathbb H}^{2}/f^{-1} K^{+} f$ has finite hyperbolic area. It is known that $f$ sends parabolic transformations to parabolic transformations, but it may send a hyperbolic transformation to a parabolic one.
A parabolic element $P \in K^{+}$ is called {\it accidental} if $f^{-1} P f$ is a hyperbolic transformation. In this case, the image under $f$ of the axis of the hyperbolic transformation $f^{-1} P f$ is called the {\it axis} of $P$ (in Maskit's notation this is the true axis of $P$). 
As it is well known that no rank two parabolic subgroup can preserve a disc in the Riemann sphere, it follows that $f^{-1} K^{+} f$ does not contains  rank two parabolic subgroup, in particular, $K^{+}$ neither does contains a rank two parabolic subgroup. 
Theorem IX.D.21 in \cite{Maskit:book} states that $K^{+}$ is either quasifuchisian or totally degenerate or it contains accidental parabolics. By our assumptions on $K$ and $K^{+}$, we have that necessarily 
$K^{+}$ must have accidental parabolic transformations.  Moreover, there is a finite number of conjugacy classes of primitive accidental parabolic transformations in $K^{+}$. Let us consider a collection of accidental parabolic transformations in $K^{+}$, say $P_{1}$,..., $P_{m}$, so that $P_{j}$ is not $K^{+}$-conjugate to $P_{r}^{\pm 1}$ if $j \neq r$, and $P_{j}$ is primitive, that is it is not of the form $Q^{a}$ for some $Q \in K$ and $a \geq 2$. Let us denote by $L_{j} \subset \Delta$ the axis of 
$P_{j}$ (note that $L_{j}$ is a geodesic for the hyperbolic metric of $\Delta$ and that $P_{j}$ keeps it invariant acting by a translation on it).

\begin{lemm}\label{lema17}
(1) If $j \neq r$, then the $K^{+}$-translates of $L_{j}$ do not intersect the $K^{+}$-translates of $L_{r}$. 
(2) For each fixed $j$, any $K^{+}$-translates of $L_{j}$ is either disjoint from $L_{j}$ or it coincides with it.
\end{lemm}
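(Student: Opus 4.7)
The plan is to transport everything to $\mathbb H^2$ via $f$, reducing both parts of the lemma to a single dichotomy on pairs of axis-translates. Set $\Gamma:=f^{-1}K^+f$ (Fuchsian), $Q_\ell:=f^{-1}P_\ell f\in\Gamma$ (hyperbolic), $\widetilde L_\ell:=f^{-1}(L_\ell)$ (the hyperbolic axis of $Q_\ell$), and, for $g\in\Gamma$, $\widehat g:=fgf^{-1}\in K^+$. Suppose $\widetilde L_j$ and $g\widetilde L_r$ share a point, for some $g\in\Gamma$. If they coincide, then $Q_j$ and $gQ_rg^{-1}$ share the same hyperbolic axis, so they both lie in the cyclic group of hyperbolic translations of that axis; the primitivity of $Q_j,Q_r$ inherited from $P_j,P_r$ forces $Q_j=(gQ_rg^{-1})^{\pm 1}$. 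This gives the equality of translates claimed in (2) when $j=r$, and contradicts the choice of non-$K^+$-conjugate representatives in (1) when $j\ne r$. It therefore remains to rule out transverse intersection: the case $\widetilde L_j\ne g\widetilde L_r$ sharing a point $x\in\mathbb H^2$.

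Assume transverse intersection. Pushing forward by $f$, the distinct true axes $L_j$ and $\widehat g(L_r)$ cross transversally at $f(x)\in\Delta$, and the parabolic elements $P_j,\widehat gP_r\widehat g^{-1}\in K^+$ have fixed points $p_j,\widehat g(p_r)\in\widehat{\mathbb C}$. \emph{Case A: $\widehat g(p_r)=p_j$.} Both parabolics lie in the parabolic subgroup of $\mathrm{Stab}_{K^+}(p_j)$. Since $K$ is not elementary, quasifuchsian, or totally degenerate, Theorem IX.D.21 of \cite{Maskit:book} yields that $K^+$ contains no rank-two parabolic subgroups, so this parabolic stabilizer is cyclic, and by primitivity of $P_j$ coincides with $\langle P_j\rangle$. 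Hence $\widehat gP_r\widehat g^{-1}=P_j^k$ for some $k\in\mathbb Z$; conjugating gives $P_r=(\widehat g^{-1}P_j\widehat g)^k$, and primitivity of $P_r$ forces $k=\pm 1$. For $j\ne r$ this contradicts the choice of $P_1,\ldots,P_m$, while for $j=r$ it yields $\widehat gP_j\widehat g^{-1}=P_j^{\pm 1}$; since $P_j$ and $P_j^{-1}$ share the same true axis, $\widehat g(L_j)=L_j$, contradicting the distinctness of $\widetilde L_j$ and $g\widetilde L_j$.

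\emph{Case B: $\widehat g(p_r)\ne p_j$.} Form the closed curves $\sigma:=L_j\cup\{p_j\}$ and $\sigma':=\widehat g(L_r)\cup\{\widehat g(p_r)\}$ in $\widehat{\mathbb C}$. Each lies in $\overline\Delta$ and meets $\partial\Delta$ in a single point, and these points differ; consequently every intersection of $\sigma$ with $\sigma'$ lies in $\Delta$ and equals an intersection of the hyperbolic geodesics $L_j$ and $\widehat g(L_r)$. Two distinct geodesics of $\Delta\cong\mathbb H^2$ meet in at most one point, so $\#(\sigma\cap\sigma')\le 1$; but $\widehat{\mathbb C}$ is simply connected, so the mod-two intersection number of two transversally intersecting closed curves vanishes, forcing $\#(\sigma\cap\sigma')$ to be even. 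Combined, $\#(\sigma\cap\sigma')=0$, contradicting the transverse crossing at $f(x)$. The main obstacle is Case A: it rests on the absence of rank-two parabolic subgroups and the resulting identification of the parabolic stabilizer of $p_j$ with $\langle P_j\rangle$, both standard inputs from B-group theory. Once these are in hand, Case B is a purely topological parity-versus-uniqueness argument.
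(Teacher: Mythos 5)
Your proof is correct and follows essentially the same route as the paper: reduce to the fact that distinct geodesics of $\Delta\cong{\mathbb H}^{2}$ meet in at most one point, dispose of the coincidence case via primitivity and discreteness, and rule out a transverse crossing by splitting on whether the two parabolic fixed points agree (rank-two parabolic obstruction) or not (planarity of $\widehat{\mathbb C}$). Your Case B merely makes explicit, via the mod-two intersection parity of the Jordan curves $\widehat L_{j}$, what the paper compresses into the phrase ``the planarity of $\Delta$ asserts\ldots'', so no further comment is needed.
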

\begin{proof}
Let us consider a Riemann map $f:{\mathbb H}^{2} \to \Delta$, where ${\mathbb H}^{2}$ is the upper half-plane with the hyperbolic metric $ds^{2}=|dz|^{2}/{\rm Im}(z)^{2}$. It is well known that any two different geodesics in ${\mathbb H}^{2}$ are either disjoint of they intersect at exactly one point. The push-forward of the hyperbolic metric in ${\mathbb H}^{2}$ provides the hyperbolic metric of $\Delta$. It follows that any $K^{+}$-translate of $L_{j}$ and any $K^{+}$ translate of $L_{r}$ (for $j$ not necessarily different from $r$) are either disjoint or they intersect exactly at one point or they are the same. 
Let us first prove (1), that is, we assume $j \neq r$.
If there are $K^{+}$-translates of $L_{j}$ and $L_{r}$ which are the same, as $P_{j}$ and $P_{r}$ are primitive parabolic, share the same fixed point and $K^{+}$ is discrete, then $P_{j}$ is conjugate to either $P_{r}^{\pm 1}$, a contradiction. If there are $K^{+}$-translates of $L_{j}$ and $L_{r}$ which intersect at a point, then the planarity of $\Delta$ asserts that the non-empty intersection only may happens if a $K^{+}$ conjugate of $P_{j}$ and a $K^{+}$-conjugate of $P_{r}$ share their unique fixed point. The discreteness of $K^{+}$ asserts that $K^{+}$ must contains a rank two parabolic subgroup, a contradiction.
Let us now prove (2), that is, we assume $j=r$. This follows the same lines a the previous case to see that either the translates are either disjoint or equal. 
\end{proof}

\begin{lemm}\label{lema2}
If $T \in K-K^{+}$, then $T$ preserves the collection of $K^{+}$-translates of $\{L_{1},....,L_{m}\}$.
\end{lemm}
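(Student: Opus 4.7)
The plan is to exploit the normality of $K^{+}$ in $K$ together with the Riemann map $f:\mathbb{H}^{2}\to\Delta$ (already introduced in the proof of Theorem \ref{propo}) to show that $T$ sends each $L_{j}$ to a $K^{+}$-translate of some $L_{r}$.

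First, since $[K:K^{+}]=2$, the subgroup $K^{+}$ is normal in $K$, so for any $T\in K-K^{+}$ and any $P\in K^{+}$ one has $TPT^{-1}\in K^{+}$. Applying this to the accidental parabolic $P_{j}$, one obtains an element $TP_{j}T^{-1}\in K^{+}$, and since anticonformal conjugation preserves the dynamical type of a M\"obius transformation (a point is fixed with unit multiplier if and only if its $T$-image is), $TP_{j}T^{-1}$ is parabolic.

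Next, since $\Delta$ is $K$-invariant, the map $\widetilde{T}=f^{-1}Tf$ is a well-defined anticonformal automorphism of $\mathbb{H}^{2}$. Moreover
\[
f^{-1}(TP_{j}T^{-1})f=\widetilde{T}\,(f^{-1}P_{j}f)\,\widetilde{T}^{-1},
\]
and because $f^{-1}P_{j}f$ is hyperbolic and anticonformal conjugation in $\mathbb{H}^{2}$ preserves hyperbolicity, $f^{-1}(TP_{j}T^{-1})f$ is hyperbolic as well. Hence $TP_{j}T^{-1}$ is an accidental parabolic of $K^{+}$; it is also primitive, since primitivity is preserved under conjugation. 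By the maximality of the list $\{P_{1},\dots,P_{m}\}$, there exist an index $r$, a sign $\varepsilon\in\{\pm1\}$, and an element $g\in K^{+}$ with
\[
TP_{j}T^{-1}=gP_{r}^{\varepsilon}g^{-1}.
\]

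Now the axis of an accidental parabolic is natural: the axis of $f^{-1}(TP_{j}T^{-1})f$ in $\mathbb{H}^{2}$ is $\widetilde{T}(\text{axis of }f^{-1}P_{j}f)$, and pushing forward by $f$ shows that the axis of $TP_{j}T^{-1}$ equals $T(L_{j})$. By the same naturality applied to $K^{+}$-conjugation, the axis of $gP_{r}^{\varepsilon}g^{-1}$ is $g(L_{r})$ (using that $L_{r}$ is also the axis of $P_{r}^{-1}$). Therefore $T(L_{j})=g(L_{r})$, which is a $K^{+}$-translate of $L_{r}$. Finally, for any element $k\in K^{+}$, normality gives $TkT^{-1}\in K^{+}$ and hence
\[
T(k(L_{j}))=(TkT^{-1})\,T(L_{j})=(TkT^{-1})\,g(L_{r}),
\]
which is again a $K^{+}$-translate of $L_{r}$. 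This proves that $T$ permutes the collection of $K^{+}$-translates of $\{L_{1},\dots,L_{m}\}$.

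The main point requiring care is the naturality statement in the third paragraph: that under the Riemann map $f$ the notion of ``axis of an accidental parabolic'' transports correctly through conjugation by the anticonformal element $T$. This is not an obstacle in substance, since the hyperbolic metric on $\Delta$ is the push-forward of that on $\mathbb{H}^{2}$ via $f$ and both $\widetilde{T}$ and $T$ are isometries for their respective hyperbolic metrics, but it is the step where one must write the identifications carefully to avoid confusing Maskit's ``true axis'' on $\Delta$ with the genuine hyperbolic axis on $\mathbb{H}^{2}$.
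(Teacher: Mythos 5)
Your proof is correct and follows essentially the same route as the paper's (much terser) argument: $T$ normalizes $K^{+}$, acts as a hyperbolic isometry of $\Delta$, hence permutes the primitive accidental parabolics up to $K^{+}$-conjugacy and inversion, and the naturality/uniqueness of axes then forces $T$ to permute the translates of the $L_{j}$. Your version merely makes explicit, via the Riemann map $f$ and the conjugation identities, the details the paper leaves implicit.
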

\begin{proof}
$T$ acts as an isometry on $\Delta$ and must permute the accidental parabolic transformations. As the axis are unique for each accidental parabolic, we are done.
\end{proof}

Let $\widehat{L}_{j}$ be equal to $L_{j}$ together the corresponding fixed point of $P_{j}$. Then the collection ${\mathcal F}$ given by the  $K^{+}$-translates of $\{\widehat{L}_{1},..., \widehat{L}_{m}\}$ consists of pairwise disjoint simple loops; each one is called a {\it structure loop} for the group $K$. Such a collection of structure loops still invariant for any $T \in K-K^{+}$ by Lemma \ref{lema2}. The structure loops cut $\Omega$ (the region of discontinuity of $K$) and $\Delta$ into regions; called {\it structure regions} for $K$. These are different from our previous definitions of structure loops and regions as these ones are not completely contained in the region of discontinuity.

Let $\alpha \in {\mathcal F}$ be a structure loop and let $R_{1}$ and $R_{2}$ be the two structure regions containing $\alpha$ in their common boundary. Let $K_{j}<K$ be the $K$-stabilizer of $R_{j}$, 
let $K_{\alpha}$ be the $K$-stabilizer of $\alpha$ and let $P \in K$ be the primitive accidental parabolic transformation whose axis is $\alpha$ (which is then $K$-conjugated to some of the $P_{j}$'s). Clearly, $\langle P \rangle$ is contained in $K_{j}$, 
$\langle P \rangle < K_{\alpha}$ and either (i) $\langle P \rangle = K_{\alpha}$ or (ii) $\langle P \rangle$ has index two in $K_{\alpha}$ or (iii) 
$\langle P \rangle$ has index four in $K_{\alpha}$ (this last case means that $\langle P \rangle$ has index two inside the $K_{j}$-stabilizer of $\alpha$). 
The region $R_{3-j}$ is contained in a disc $D_{3-j}$, whose $K_{j}$-stabilizer is equal to the $K_{j}$-stabilizer of the loop $\alpha$; this is either the cyclic group generated by $P$ or it contains it as an index two subgroup. It follows that $D_{3-j}$ is contained in the region of discontinuity $\Omega_{j}$ of $K_{j}$ and that there is an invariant connected component $\Delta_{j} \subset \Omega_{j}$ containing $\Delta$. 
Lemma IX.H.10 in \cite{Maskit:book} states that $K_{j}^{+}$ is a B-group, with $\Delta_{j}$ as invariant simply-connected component, without accidental parabolic transformations. It follows that $K_{j}^{+}$ is either elementary or quasifuchsian or totally degenerate, in particular, that $K_{j}$ is either (extended) elementary or (extended) quasifuchsian or (extended) totally degenerate. 
One possibility is that $K_{\alpha}$ is an extension of degree two of the $K_{j}$-stabilizer of $\alpha$. In this case, there is an element $Q \in K_{\alpha}$ that permutes $R_{1}$ with $R_{2}$ ($Q$ is either a pseudo-parabolic whose square is $P$ or an involution). In this case, $\langle K_{1},K_{2}\rangle$ is the HNN-extension of $K_{1}$ by $Q$ (in the sense of the second Klein-Maskit combination theorem). The other possibility is that $K_{\alpha}$ is equal to $K_{1} \cap K_{2}$ (either the cyclic group generated by the parabolic $P$ or a group generated by two reflections sharing as a common fixed point the fixed point of $P$). In this case, $\langle K_{1},K_{2}\rangle$ is the free product of $K_{1}$ and $K_{2}$ amalgamated over $K_{1} \cap K_{2}$ (in the sense of the first Klein-Maskit combination theorem).

Now, following the same ideas in \cite{Maskit:function3, Maskit:function4}, one obtains a decomposition of $K$ as an amalgamated free products and HNN-extensions, by use of the Klein-Maskit combination theorems, using (extended) elementary groups, (extended) quasifuchsian groups and (extended) totally degenerate groups. 
 \hfill $\square$

\subsection*{Acknowledgment}
The author would like to thank the referees for their valuable comments, suggestions and corrections.


\end{document}